\documentclass[10pt]{article}

\usepackage[T1]{fontenc}

\usepackage{amsmath}
\usepackage{amsthm}
\usepackage{amssymb}
\usepackage{latexsym}

\usepackage{enumitem}
\usepackage{calc}
\usepackage{xspace}

\usepackage{tikz}
\usetikzlibrary{calc,backgrounds,hobby}

\usepackage[small, pagestyles]{titlesec}
\usepackage[small, it]{caption}

\usepackage[square,comma,numbers,sort&compress]{natbib}
\usepackage[multiple, flushmargin, hang]{footmisc}

\usepackage{color}
\definecolor{lightgray}{rgb}{0.8,0.8,0.8}
\definecolor{darkgray}{rgb}{0.7,0.7,0.7}
\definecolor{darkblue}{rgb}{0,0,.4}

\usepackage[bookmarks]{hyperref}
\hypersetup{
	colorlinks=true,
	linkcolor=black,
	anchorcolor=black,
	citecolor=black,
	urlcolor=black,
	pdfpagemode=UseThumbs,
	pdftitle={Linear clique-width and modular decomposition},
	pdfsubject={Combinatorics},
	pdfauthor={Brignall, Opler, and Vatter},
}

\newcommand{\minisec}[1]{\medskip\noindent\textbf{#1.}}

\newcommand{\Av}{\operatorname{Av}}
\newcommand{\lcw}{\operatorname{lcw}}

\newcommand{\VH}{\;\!\!V\!(\!H\;\!\!)\!}

\newcommand{\plotpoints}[2][v]{%
	\foreach \i/\j [count=\k] in {#2} {%
		\node (#1\k) [fill,circle,inner sep=0pt,minimum size=5pt] at (\i,\j) {};
	}%
}
\newcommand{\drawjoin}[2]{%
	\foreach \nodea in {#1} {%
		\foreach \nodeb in {#2} {%
			\edef\tempA{\nodea}%
			\edef\tempB{\nodeb}%
			\ifx\tempA\tempB
			\else
				\draw (\nodea)--(\nodeb);
			\fi
		}%
	}%
}

\theoremstyle{plain}
\newtheorem{theorem}{Theorem}[section]
\newtheorem{proposition}[theorem]{Proposition}
\newtheorem*{proposition*}{Proposition}

\newtheorem{observation}[theorem]{Observation}

\newtheorem*{remark*}{Remark}

\renewenvironment{abstract}
  {
    \begin{list}{}%
      {\setlength{\rightmargin}{1in}%
       \setlength{\leftmargin}{1in}}%
    \item[]\ignorespaces\begin{small}
  }
  {
    \end{small}\unskip\end{list}
  }

\newpagestyle{main}[\small]{
  \headrule
  \sethead[\usepage][][]
          {\sc Linear Clique-Width and Modular Decomposition}{}{\usepage}
}

\titleformat{\section}
  {\large\sc}
  {\thesection.}{1em}{}

\title{\sc Linear Clique-Width and Modular Decomposition}

\author{
  Robert Brignall%
  \footnote{The Open University, School of Mathematics and Statistics, Milton Keynes, England UK.},
  \quad
  Michal Opler%
  \footnote{Czech Technical University, Department of Theoretical Computer Science, Prague, Czech Republic.},
  \quad
  and
  \quad
  Vincent Vatter%
  \footnote{University of Florida, Department of Mathematics, Gainesville, Florida USA.}
}

\date{\today}
\vspace{-0.25in}

\begin{document}
\maketitle

\begin{abstract}
A hereditary class of graphs has bounded clique-width if and only if its prime members do, but this lifting property fails for linear clique-width. We prove that a hereditary class has bounded linear clique-width if and only if its prime members do and it contains neither all quasi-threshold graphs nor all complements of quasi-threshold graphs. This generalizes a result of Brignall, Korpelainen, and Vatter, who established the result for cographs.
\end{abstract}

\pagestyle{main}

\section{Introduction}

A hereditary class of graphs has bounded clique-width if and only if its prime members have bounded clique-width; this follows easily from the definition. For linear clique-width, however, this lifting property fails. The class of cographs ($P_4$-free graphs) provides the simplest example: its only prime members, $K_2$ and $\overline{K}_2$, have linear clique-width bounded by~$2$, yet the class itself has unbounded linear clique-width. Brignall, Korpelainen, and Vatter~\cite{brignall:linear-clique-w:} identified the source of this failure within cographs, showing that the quasi-threshold graphs and their complements are the only obstructions.

\begin{theorem}[Brignall, Korpelainen, and Vatter~{\cite[Theorem 1.1]{brignall:linear-clique-w:}}]
\label{thm:bkv}
A hereditary class of cographs has bounded linear clique-width if and only if it contains neither all quasi-threshold graphs nor the complements of all quasi-threshold graphs.
\end{theorem}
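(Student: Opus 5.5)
This is a cited result, so I only sketch how I would reprove it within cographs. Both directions go through the cotree. Every cograph $G$ has a unique cotree whose internal nodes alternate between disjoint union ($\oplus$) and join ($\otimes$). Quasi-threshold graphs are exactly the cographs in which, for every join node, all children except at most one are single vertices. Equivalently, they are the $\{P_4,C_4\}$-free graphs, or the comparability graphs of rooted forests.

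\textbf{Necessity.} I would show that quasi-threshold graphs have unbounded linear clique-width. Since complementation changes linear clique-width by at most a bounded amount, the complements then follow. Take $T_h$, the comparability graph of the complete binary tree of height $h$. A linear clique-width expression of width $k$ yields a linear vertex ordering in which every cut has only $k$ distinct ``neighbourhood types'' on its right side. Using the recursive branching structure, one takes the midpoint of the ordering and finds a subtree split between both sides. Its chain of ancestors then forces many distinguishable vertices across one cut. An induction on $h$ along these lines (the standard ``linear rank-width of trees grows with height'' argument adapted to comparability graphs) gives $\lcw(T_h)\ge \Omega(h)$.

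\textbf{Sufficiency.} Suppose the class $\mathcal C$ omits some quasi-threshold graph $Q_1$ and the complement of some quasi-threshold graph $Q_2$. In the cotree of $G\in\mathcal C$, define the \emph{join-depth} as the maximum number of join nodes with at least two non-leaf children encountered along a root-to-leaf path, and define the \emph{union-depth} dually. I would prove two things.
\begin{enumerate}[label=(\roman*)]
\item Bounded join-depth and union-depth implies bounded linear clique-width. The idea is to process the cotree so that at each node the ``heavy'' child (the one with the largest subtree, or the only non-trivial child) is built last. Only children that are single vertices or have strictly smaller depth parameters are built first, with their labels collapsed to one ``finished'' label after each $\oplus/\otimes$. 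An induction on the depth pair gives width $O(\text{join-depth}+\text{union-depth})$.
\item Omitting $Q_1$ bounds one depth and omitting $Q_2$ bounds the other. Concretely, large join-depth plus a Ramsey-type extraction yields arbitrarily large members of the complement side, or of the quasi-threshold side for union-depth. Every quasi-threshold graph embeds in a large enough comparability graph of a complete tree.
\end{enumerate}

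The \textbf{main obstacle} is step (ii), and especially choosing the right depth notion so that (i) and (ii) both hold. Nesting can alternate between joins whose non-trivial children are arbitrary, so a long chain of ``branching joins'' does not immediately produce a complete-tree complement. I would first try a König/Ramsey argument: from large branching depth, extract a complete binary branching structure using only one of the two operations. The other operation is skipped by passing to an induced subgraph that picks one child of each intervening union node. This yields either $\overline{T_h}$ or $T_h$ as an induced subgraph, contradicting the choice of $\mathcal C$.
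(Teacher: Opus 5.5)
\textbf{Gap in step (ii).} Step (ii) is not merely the hard step. With depth defined by counting nodes that have two non-leaf children, it is false, so no K\"onig/Ramsey argument can establish it. Let $G_1=K_2$ and $G_{k+1}=K_1\ast(G_k\uplus K_2)$.
\begin{itemize}[noitemsep, topsep=2pt]
\item Every $G_k$ is quasi-threshold, so the hereditary closure $\mathcal{C}$ of $\{G_k\}$ omits the co-quasi-threshold graph $C_4$.
\item $\lcw(G_k)\le 3$. Keep everything built so far on label $1$. Build the new $K_2$ on labels $2,3$ and relabel it to $1$. Then insert the apex on label $2$, join $2$ to $1$, and relabel $2$ to $1$. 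Hence $\mathcal{C}$ also omits $Q_t$ for all large $t$, so it satisfies both hypotheses of the theorem.
\item Yet every union node in the cotree of $G_k$ has two non-leaf children, namely a copy of $G_i$ and a $K_2$. So the union-depth of $G_k$ is $k-1$.
\end{itemize}
Thus the hypotheses do not bound your depths, whichever way (ii) pairs them. The example also shows why the proposed extraction cannot work. All the branching nodes lie on a single path, and at each of them one side is just an edge; the underlying rooted forest is a caterpillar. So there is no complete binary structure to extract.

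\textbf{What the right parameter is.} Linear clique-width responds to branching in which \emph{both} sides are large. That is measured by a Horton--Strahler-type rank, not a depth. Set $r(\text{leaf})=0$, and let $r(u)$ be the maximum rank of the children of $u$, plus one when that maximum is attained by at least two children. You can keep your two-parameter format by incrementing only at union nodes, respectively only at join nodes; the combined rank is at most their sum.
\begin{itemize}[noitemsep, topsep=2pt]
\item Your heavy-path construction in (i) still works and gives $\lcw(G)\le r(G)+O(1)$. Along a spine that follows maximum-rank children, every off-spine subtree has rank at most $r(G)-1$.
\item Step (ii) becomes the induction of Proposition~\ref{prop:main}. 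Following maximum-rank children from the root, you reach a node where the maximum is attained twice. By induction, both tied children contain $Q_{t-1}$ and $\overline{Q}_{s-1}$.
\item If that node is a union, each tied child is a join of co-components. For $t\ge 3$ the disconnected $Q_{t-1}$ lies in one co-component, so a vertex of another co-component gives $K_1\ast Q_{t-1}$. The other tied child supplies a disjoint $Q_{t-1}$, giving $Q_t$. The join case dually gives $\overline{Q}_s$.
\end{itemize}

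\textbf{Comparison with the paper.} The paper needs no auxiliary parameter and no analogue of (i). It runs this induction directly on $\lcw$ via Proposition~\ref{prop:lcw:two:modules}: either some module has $\lcw(G_x)=\lcw(G)$, or two modules have $\lcw\ge\lcw(G)-\lcw(H)-1$, with $\lcw(H)\le 2$ for cographs. This is also what lets the argument extend to prime skeletons.

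Your necessity sketch is too loose to check as written. In particular, the ancestors of a split subtree all look alike to that subtree. However, the paper also simply cites the unboundedness of $\lcw(Q_t)$, so this is not the main issue.
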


Our main result shows that this is not special to cographs: quasi-threshold graphs and their complements are, in general, the only obstructions to lifting bounded linear clique-width from the prime members to the whole class.

\begin{theorem}
\label{thm:primes:bdd:lcw}
A hereditary class of graphs has bounded linear clique-width if and only if its prime members have bounded linear clique-width and it contains neither all quasi-threshold graphs nor all complements of quasi-threshold graphs.
\end{theorem}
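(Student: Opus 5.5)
The forward direction is immediate. Prime members form a subclass, so their linear clique-width is bounded whenever that of the whole class is. Theorem~\ref{thm:bkv} shows that the quasi-threshold graphs, and their complements, have unbounded linear clique-width, so neither family can lie entirely inside a class of bounded linear clique-width. All the work is in the converse.

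Suppose every prime member of the hereditary class $\mathcal{C}$ has linear clique-width at most $k$, and $\mathcal{C}$ omits some quasi-threshold graph $Q_1$ and some complement $\overline{Q_2}$ of a quasi-threshold graph. I would argue through the modular decomposition tree of $G\in\mathcal{C}$. Each internal node is a series node, a parallel node, or a prime node whose quotient graph lies in $\mathcal{C}$; the quotient is an induced subgraph, using one representative per child module. So every prime quotient has a linear $k$-expression.

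The key intermediate notion is a ``nesting depth''. Along any root-to-leaf path in the tree, consider the alternation pattern of series and parallel nodes, with prime nodes acting as either kind. I claim the forbidden graphs bound how deeply a graph can nest ``union, then join, then union, \ldots'' with nontrivial branching at each step.

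The first step is to build canonical quasi-threshold graphs from decomposition trees. Picking one vertex from one child and recursing into another child realizes a rooted-forest (comparability) structure: a parallel node followed by a series node, both with at least two children, yields a vertex adjacent to everything in a disjoint union. From this I would show that if $G$ has a long chain of alternating parallel--series nodes, each with a second child, then $G$ contains a large ``path-like'' quasi-threshold graph, or dually a large complement of one. Prime nodes also contribute, because a prime graph contains an induced $P_4$, which provides both an edge and a non-edge relation between modules.

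The second step is the main obstacle. The obstruction is not depth alone, since arbitrarily deep trees of caterpillar shape can still have bounded linear clique-width. Following the cograph proof of Brignall, Korpelainen, and Vatter, I would use a rank-type parameter on rooted trees: the largest $h$ such that the tree contains a complete binary tree of height $h$ as a topological minor, with appropriate node types. By Theorem~\ref{thm:bkv}, a large such rank produces an induced quasi-threshold graph containing $Q_1$, or a complement containing $\overline{Q_2}$, which $\mathcal{C}$ forbids. In the prime setting I would first replace each prime node by a series or parallel node in the analysis, choosing whichever type is needed, and justify this using the edges and non-edges guaranteed by the prime quotient.

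The third step bounds linear clique-width by induction on this rank $r$. Fix the heaviest-rank root-to-leaf spine. Every subtree hanging off the spine has rank less than $r$, so it has a linear expression with $f(r-1)$ labels. Process the spine bottom-up. At each spine node, the current spine module carries a single label; process the side children one at a time, each with fresh labels. When a side child is finished, collapse all its labels to a single one and perform the node's operation against the spine module. For a prime node, that operation follows the node's linear $k$-expression, with a small number of extra labels. This gives $f(r)\le f(r-1)+k+O(1)$, which is bounded. Making the prime-node case fit into this spine scheme is the delicate part: the quotient's linear expression need not add the spine child last, so I may need to reorder or carry up to $k$ labels along the spine, giving $f(r)\le f(r-1)+2k+2$.
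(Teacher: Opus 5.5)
Your forward direction is fine. Your Step~3 is essentially sound, provided the rank is the plain Strahler-type number: the height of the largest complete binary tree that is a rooted topological minor, with no conditions on node types. Then each node has at most one child of maximal rank. Reordering the quotient's expression so the spine child is inserted first (Proposition~\ref{prop:lcw:reorder}) gives $f(r)\le f(r-1)+k+1$. This is the constructive form of Proposition~\ref{prop:lcw:two:modules}, which the paper uses contrapositively. The ``appropriate node types'' in Step~2 would break this: with a type-sensitive rank, a node with two children of rank $r$ need not have rank $r+1$, so subtrees of full rank could hang off your spine.

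The real gap is Step~2, which carries the entire content of the theorem. Theorem~\ref{thm:bkv} cannot deliver it. It only tells you that the cographs in $\mathcal{C}$ have bounded linear clique-width. Turning that into a bound on rank needs a further lemma, that cographs whose cotrees have large rank have large linear clique-width, and you neither state nor prove it. The natural proof of that lemma is to extract a large $Q_t$ or $\overline{Q}_s$ directly, which makes the appeal to Theorem~\ref{thm:bkv} superfluous.

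What is missing is exactly that extraction argument, which is the paper's Proposition~\ref{prop:main}. At a branching node with heavy children $x$ and $y$, induction shows that either one of $G_x,G_y$ already contains $Q_t$ or $\overline{Q}_s$, or both contain $Q_{t-1}$ and $\overline{Q}_{s-1}$. The cases then go as follows.
\begin{itemize}
\item \emph{Prime node.} You cannot ``choose'' the type. Since $\{x,y\}$ is not a module, some $z$ is adjacent to exactly one of them. A vertex of $G_z$ then gives $Q_t=(K_1\ast Q_{t-1})\uplus Q_{t-1}$ if $x,y$ are nonadjacent, and $\overline{Q}_s$ if they are adjacent.
\item \emph{Parallel node.} You need $K_1\ast Q_{t-1}$ inside the connected module $G_x$. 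If $G_x$ is a join, this works because $Q_{t-1}$ is disconnected for $t\ge 3$ and so lies in one co-component. If $G_x$ has a prime skeleton, the copy of $Q_{t-1}$ may be spread across several of its modules. You must then descend one more level to a heavy module $G'_a$ of $G_x$ and take a vertex from a neighbour $c$ of $a$ in the prime quotient.
\item \emph{Series node.} This is dual to the parallel case.
\end{itemize}

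That extra level has to be paid for. With the plain rank it gives a threshold of roughly $2(t+s)$. The paper instead absorbs it via Proposition~\ref{prop:lcw:inflation:comp}: a series or parallel node costs only one label, which is why its budget is $m+2$ per step.

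With this lemma supplied, your two-stage route would be a legitimate alternative to the paper's single induction on $\lcw$. First, rank is at most about $2(t+s)$ by a purely combinatorial argument. Second, $\lcw(G)\le 1+(k+1)r$ for rank $r$. It has the small advantage that the combinatorial half never uses the hypothesis on prime graphs.
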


There has been considerable recent interest in identifying minimal hereditary classes of unbounded clique-width. Lozin~\cite{lozin:minimal-classes:} gave two examples of such classes, Collins, Foniok, Korpelainen, Lozin, and Zamaraev~\cite{collins:infinitely-many:} found infinitely many more, and Brignall and Cocks~\cite{brignall:uncountably-man:cw} constructed uncountably many. Atminas, Brignall, Lozin, and Stacho~\cite{atminas:minimal-classes:} showed that there exist minimal classes defined by finitely many forbidden induced subgraphs, disproving a conjecture of Daligault, Rao, and Thomass\'e~\cite{daligault:well-quasi-orde:}, and in a separate work, Brignall and Cocks~\cite{brignall:a-framework-for:} developed a unifying framework encompassing nearly all known examples. As observed by Alecu, Kant\'e, Lozin, and Zamaraev~\cite{alecu:between-clique-:}, many of these minimal classes are also minimal for linear clique-width, and this phenomenon recurs throughout the framework of~\cite{brignall:a-framework-for:}. Theorem~\ref{thm:primes:bdd:lcw} complements this line of work: once one checks that a hereditary class excludes some quasi-threshold graph and some co-quasi-threshold graph, the question of whether the class has bounded linear clique-width reduces entirely to its prime members.

The proof of Theorem~\ref{thm:bkv} in~\cite{brignall:linear-clique-w:} followed the blueprint of Albert, Atkinson, and Vatter~\cite{albert:subclasses-of-t:}, who proved an analogous enumerative result for separable permutations. That blueprint involves a substantial amount of well-quasi-order machinery which, while necessary to obtain the enumerative results in the permutation setting, is not needed for the graph-theoretic result. Indeed, this reliance on well-quasi-ordering was noted by Alecu, Kant\'e, Lozin, and Zamaraev~\cite{alecu:between-clique-:}, who commented that it limited the applicability of the approach. Our proof of Theorem~\ref{thm:primes:bdd:lcw} is considerably more direct, requiring only modular decomposition and explicit universal quasi-threshold graphs, and avoiding well-quasi-ordering entirely.

\section{Linear Clique-Width and Universal Quasi-Threshold Graphs}
\label{sec:Qt}

We write $G\uplus H$ for the \emph{disjoint union} of graphs $G$ and $H$, and $G\ast H$ for their \emph{join}, formed from $G\uplus H$ by adding all edges with one endpoint in $V(G)$ and the other in $V(H)$. We write $H\le G$ to mean that $H$ is isomorphic to an induced subgraph of $G$. A set of graphs closed under isomorphism and closed downward under the induced subgraph order is called a \emph{hereditary class}, or simply a \emph{class}.

The \emph{linear clique-width} of a graph $G$, denoted $\lcw(G)$, is the smallest number of labels needed to construct $G$ by a sequence of the following three operations:
\begin{itemize}[noitemsep, topsep=0pt]
\item introduce a new vertex and assign it a label,
\item for labels $i \neq j$, add edges between all vertices labeled $i$ and all vertices labeled~$j$, and
\item for labels $i \neq j$, change the label of all vertices labeled $i$ to $j$.
\end{itemize}
Such a sequence is called an \emph{lcw expression} for $G$, and an lcw expression is \emph{efficient} if it uses exactly $\lcw(G)$ labels. A label in an lcw expression is a \emph{sink} if it never appears in an edge-creation or relabeling operation; vertices may be assigned to a sink label, but it is otherwise inert.

Linear clique-width is a sequential restriction of \emph{clique-width}. Clique-width was introduced by Courcelle, Engelfriet, and Rozenberg~\cite{courcelle:handle-rewritin:}, and it differs from linear clique-width in that clique-width expressions may additionally take the disjoint union of two previously constructed labeled graphs. A closely related parameter, \emph{NLC-width}, was introduced by Wanke~\cite{wanke:k-nlc-graphs-an:}, and Gurski and Wanke~\cite{gurski:on-the-relation:} were the first to study the linear variant of this family of parameters, under the name \emph{linear NLC-width}. Linear clique-width was subsequently introduced by Lozin and Rautenbach~\cite{lozin:the-relative-cl:}.

We consider the following families of graphs. A \emph{quasi-threshold graph} (also called a \emph{trivially perfect graph}) is any graph that can be constructed from $K_1$ by repeatedly taking joins with $K_1$, and disjoint unions with other quasi-threshold graphs; equivalently, quasi-threshold graphs are the $\{P_4,C_4\}$-free graphs. A \emph{co-quasi-threshold graph} is the complement of a quasi-threshold graph. A \emph{cograph} is any graph that can be constructed from $K_1$ by repeatedly taking disjoint unions and joins; equivalently, cographs are the $P_4$-free graphs. Thus, the class of quasi-threshold graphs forms a proper subclass of the class of cographs.

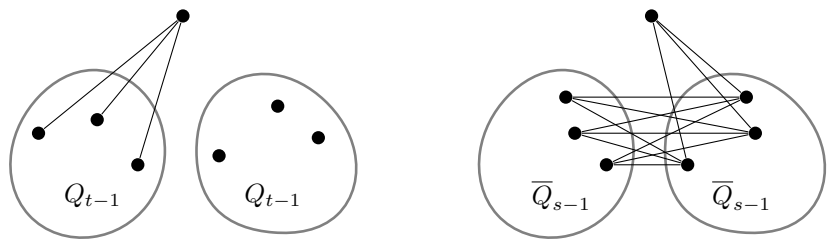
\begin{figure}
	\begin{center}
\begin{tikzpicture}[scale=0.6, baseline=(current bounding box.center)]
	\plotpoints[a]{0/3}
	\begin{scope}[shift={(-2,0)}]
		\plotpoints[l]{-1.2/0.4, 0.1/.7, 1.0/-0.3}
		\node[] at (0,-1) {$Q_{t-1}$};
		\begin{scope}[on background layer]
		\draw[use Hobby shortcut, closed, line width=1pt, opacity=0.5]
		(0:1.6)..(45:1.7)..(90:1.8)..(135:1.7)..(180:1.8)..(225:2.0)..(270:1.9)..(315:1.7);
		\end{scope}
	\end{scope}
	\begin{scope}[shift={(2,0)}]
		\plotpoints[r]{-1.2/-0.1, 0.1/1.0, 1.0/0.3}
		\node[] at (0,-1) {$Q_{t-1}$};
		\begin{scope}[on background layer]
		\draw[use Hobby shortcut, closed, line width=1pt, opacity=0.5]
		(0:1.8)..(45:1.6)..(90:1.7)..(135:1.8)..(180:1.7)..(225:1.8)..(270:1.8)..(315:2.0);
		\end{scope}
	\end{scope}
	\drawjoin{a1}{l1,l2,l3}
\end{tikzpicture}
	\qquad\qquad
\begin{tikzpicture}[scale=0.6, baseline=(current bounding box.center)]
	\plotpoints[a]{0/3}
	\begin{scope}[shift={(-2,0)}]
		\plotpoints[l]{0.3/0.4, 0.1/1.2, 1.0/-0.3}
		\node[] at (0,-1) {$\overline{Q}_{s-1}$};
		\begin{scope}[on background layer]
		\draw[use Hobby shortcut, closed, line width=1pt, opacity=0.5]
		(0:1.6)..(45:1.7)..(90:1.8)..(135:1.7)..(180:1.8)..(225:2.0)..(270:1.9)..(315:1.7);
		\end{scope}
	\end{scope}
	\begin{scope}[shift={(2,0)}]
		\plotpoints[r]{-1.2/-0.3, 0.1/1.2, 0.3/0.4}
		\node[] at (0,-1) {$\overline{Q}_{s-1}$};
		\begin{scope}[on background layer]
		\draw[use Hobby shortcut, closed, line width=1pt, opacity=0.5]
		(0:1.8)..(45:1.6)..(90:1.7)..(135:1.8)..(180:1.7)..(225:1.8)..(270:1.8)..(315:2.0);
		\end{scope}
	\end{scope}
	\drawjoin{a1}{r1,r2,r3}
	\drawjoin{l1,l2,l3}{r1,r2,r3}
\end{tikzpicture}
	\end{center}
\caption{The construction process for the graphs $Q_t$ and $\overline{Q}_s$ on the left and right, respectively.}
\label{fig:ext1}
\end{figure} 

We now introduce graphs that are universal for the classes of quasi-threshold and co-quasi-threshold graphs. Our \emph{universal quasi-threshold graphs} $Q_1, Q_2, \dotsc$ are defined by $Q_1=K_1$ and, for $t\ge 2$,
\[
	Q_t = \bigl(K_1\ast Q_{t-1}\bigr)\uplus Q_{t-1}.
\]
Their complements $\overline{Q}_1, \overline{Q}_2, \dotsc$ satisfy $\overline{Q}_1 = K_1$ and, for $s\ge 2$,
\[
	\overline{Q}_s
	= \bigl(K_1\uplus \overline{Q}_{s-1}\bigr)\ast \overline{Q}_{s-1}.
\]
These graphs are depicted in Figure~\ref{fig:ext1}.

\begin{observation}
\label{obs:universal}
For every quasi-threshold graph $G$ there exists $t$ such that $G\le Q_t$, and for every co-quasi-threshold graph $G$ there exists $s$ such that $G\le \overline{Q}_s$.
\end{observation}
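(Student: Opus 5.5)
We argue by induction on $|V(G)|$ using the recursive definition of quasi-threshold graphs, and obtain the co-quasi-threshold statement by complementation. Two monotonicity facts come first. (i) $Q_{t-1}\le Q_t$ for $t\ge 2$, since $Q_{t-1}$ is the second component of $Q_t$; hence $Q_t\le Q_{t'}$ whenever $t\le t'$. (ii) $Q_{t-1}\uplus Q_{t-1}\le Q_t$, taking the second component together with the copy of $Q_{t-1}$ inside $K_1\ast Q_{t-1}$ (delete the apex).

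For the induction, the case $G=K_1=Q_1$ is immediate. Otherwise $G$ arises in one of two ways. If $G=K_1\ast H$ with $H$ quasi-threshold, induction gives $H\le Q_r$; then $G\le K_1\ast Q_r$, which is a component of $Q_{r+1}$, so $t=r+1$ works. If $G=H_1\uplus H_2$ with $H_1,H_2$ quasi-threshold and nonempty, induction gives $H_i\le Q_{r_i}$. Setting $r=\max(r_1,r_2)$, fact (i) gives $H_1,H_2\le Q_r$, so $G\le Q_r\uplus Q_r\le Q_{r+1}$ by fact (ii). If the graph is built as an iterated union of several pieces, we just apply this step repeatedly; this is consistent with the definition, which allows disjoint unions of any two quasi-threshold graphs.

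For the complementary statement, note that $H\le G$ if and only if $\overline{H}\le\overline{G}$. We also check directly from the recursions that the complement of $Q_t$ is $\overline{Q}_t$ as defined: complementing $(K_1\ast Q_{t-1})\uplus Q_{t-1}$ gives $(K_1\uplus\overline{Q_{t-1}})\ast\overline{Q_{t-1}}$, and induction on $t$ finishes. So if $G$ is co-quasi-threshold, then $\overline{G}\le Q_s$ for some $s$, and therefore $G\le\overline{Q}_s$.

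There is no real obstacle here. The only point needing care is fact (ii), namely that two copies of $Q_{t-1}$ sit disjointly and with no edges between them inside $Q_t$; the chosen copies lie in different components, so this holds.
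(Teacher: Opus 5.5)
Your proof is correct and follows the same route as the paper. Both argue by induction on $|V(G)|$ with the cases $K_1\ast G'$ and $G_1\uplus G_2$, and obtain the co-quasi-threshold statement by complementation; your monotonicity fact $Q_r\le Q_{r'}$ for $r\le r'$ spells out a step the paper leaves implicit when it places both $G_1$ and $G_2$ inside a common $Q_{t-1}$.
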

\begin{proof}
We prove the first claim by induction on $|V(G)|$; the second follows by complementation. The base case is immediate since $Q_1=K_1$. Every quasi-threshold graph on two or more vertices is either a disjoint union $G_1\uplus G_2$ of smaller quasi-threshold graphs, or the join $K_1\ast G'$ of a single vertex with a smaller quasi-threshold graph. In the first case, $G_1, G_2 \le Q_{t-1}$ by induction, so $G\le Q_{t-1}\uplus Q_{t-1}\le Q_t$. In the second, $G'\le Q_{t-1}$, so $G\le K_1\ast Q_{t-1}\le Q_t$.
\end{proof}

Every cograph has clique-width at most $2$ (this is immediate from the definitions and first observed by Courcelle and Olariu~\cite{courcelle:upper-bounds-to:}), but Gurski and Wanke~\cite{gurski:on-the-relation:} showed that this class has unbounded linear clique-width. Brignall, Korpelainen, and Vatter~\cite{brignall:linear-clique-w:} observed that linear clique-width is unbounded even for the subclass of quasi-threshold graphs. Combined with Observation~\ref{obs:universal}, this implies that the linear clique-width of the graphs $Q_t$ and $\overline{Q}_s$ is unbounded.

\section{Modular Decomposition}
\label{sec-mod-decomp}

We begin by recalling the theory of modular decomposition, which has been independently discovered in numerous contexts; see Gallai~\cite{gallai:transitiv-orien:,gallai:a-translation-o:} for the foundational work in the graph setting. A \emph{module} of a graph $G$ is a subset $M\subseteq V(G)$ with the property that every vertex outside $M$ is either adjacent to all of the vertices of $M$ or to none of them. The empty set, each singleton, and $V(G)$ are all modules; we call these \emph{trivial}. A graph on at least two vertices is \emph{prime} if all of its modules are trivial. Note that there are no prime graphs on three vertices, and that $K_2$ and $\overline{K}_2$ are both prime under our definition; some other works require prime graphs to have at least four vertices.

Given a graph $G$, we say that $G$ is an \emph{inflation} of the graph $H$ if $G$ can be obtained by replacing every vertex $v \in V(H)$ with a nonempty graph $G_v$, so that every vertex of $G_u$ is adjacent to every vertex of $G_v$ in $G$ if and only if $u$ is adjacent to $v$ in $H$. We write this as $G = H[G_v : v \in V(H)]$, and call $H$ the \emph{skeleton} of the decomposition.

\begin{theorem}
\label{thm:mod:decomp}
For every graph $G$ on two or more vertices, there exists a unique prime graph $H$ on two or more vertices and nonempty graphs $\{G_v : v\in V(H)\}$ such that $G = H[G_v : v \in V(H)]$. Moreover, if $H$ has at least four vertices, then the graphs $G_v$ are also unique.
\end{theorem}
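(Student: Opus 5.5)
We prove the modular decomposition theorem in the standard way, splitting into three cases according to whether $G$ is disconnected, $\overline{G}$ is disconnected, or both $G$ and $\overline{G}$ are connected.

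\textbf{Existence.} If $G$ is disconnected, take $H=\overline{K}_2$. Let $G_1$ be one connected component and $G_2$ the union of the rest; then $G=\overline{K}_2[G_1,G_2]$. If $\overline{G}$ is disconnected, take $H=K_2$ and argue symmetrically. Both $G$ and $\overline{G}$ cannot be disconnected simultaneously, since if $G$ is disconnected then $\overline{G}$ contains a complete bipartite spanning subgraph and is connected.

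In the remaining case both $G$ and $\overline{G}$ are connected. The key lemma is that the maximal proper modules of $G$ (those modules $M\ne V(G)$ maximal under inclusion) partition $V(G)$. To prove it, first note that if two modules $M,N$ overlap, meaning they intersect and neither contains the other, then $M\cup N$ is a module; this is a routine check. Suppose two maximal proper modules $M,N$ overlap. Then $M\cup N$ is a module, so by maximality $M\cup N=V(G)$. The differences $M\setminus N$ and $N\setminus M$ are also modules, and one checks that every vertex of $M\setminus N$ is either completely adjacent or completely nonadjacent to $N\setminus M$. The same holds for adjacency to $M\cap N$. This forces $V(G)$ to split into two nonempty parts that are either completely joined or completely nonadjacent, so $G$ or $\overline{G}$ is disconnected, a contradiction. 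Hence the maximal proper modules partition $V(G)$ (every vertex lies in one, since singletons are proper modules when $|V(G)|\ge 2$).

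Next, contract each maximal proper module to a single vertex to obtain the skeleton $H$; this is well defined because distinct modules are completely adjacent or completely nonadjacent to each other. $H$ is prime, because a nontrivial module of $H$ would pull back to a proper module of $G$ strictly containing some maximal one. Also $|V(H)|\ge 2$, and in fact $|V(H)|\ge 4$, since no prime graph has three vertices and $K_2,\overline{K}_2$ are excluded by connectivity.

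\textbf{Uniqueness.} Given any decomposition $G=H'[G'_v]$ with $H'$ prime, the sets $V(G'_v)$ are modules of $G$. If $H'\in\{K_2,\overline{K}_2\}$, then $G$ or $\overline{G}$ is disconnected, so $H'$ is determined by connectivity; note the $G_v$ need not be unique here. If $|V(H')|\ge4$, then $H'$ is connected and co-connected, so $G$ is too, and we must show each $V(G'_v)$ is a maximal proper module. Suppose a proper module $M$ of $G$ strictly contains some $V(G'_v)$. Then the set $\{u: M\cap V(G'_u)\ne\emptyset\}$ is a module of $H'$. It is nontrivial unless $M$ meets every part. If $M$ meets every part, then $M$ partially meets some part $V(G'_u)$. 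Using a vertex of $V(G'_u)\setminus M$ and primality of $H'$ (its neighbourhood must separate some vertices of $H'$), we derive a contradiction. The main obstacle is organizing this last argument and the overlap lemma cleanly; both are the standard Gallai arguments, and I would follow them directly.
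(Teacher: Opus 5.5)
Your proposal is correct. It is the standard Gallai argument: when $G$ and $\overline{G}$ are both connected, the maximal proper modules cannot overlap and so partition $V(G)$; their quotient is prime on at least four vertices; and any prime skeleton on at least four vertices must have exactly these modules as its parts. The disconnected and co-disconnected cases force $H=\overline{K}_2$ or $K_2$. The paper does not prove this theorem itself but cites Gallai for it, so you have supplied the classical proof behind that citation.
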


When the skeleton provided by Theorem~\ref{thm:mod:decomp} has four or more vertices, it is neither complete nor anti-complete, and the decomposition is unambiguous. When the skeleton is $\overline{K}_2$ or $K_2$, this means that either $G$ or its complement, respectively, is disconnected. In this case, we refine by decomposing into connected components or co-components (connected components of the complement), and take the skeleton to be an anti-complete or complete graph that is as large as possible. We refer to the result of this process as the first stage of the \emph{modular decomposition} of~$G$; note that the full modular decomposition would continue recursively to the modules, but we only need one stage at a time here.

The remainder of this section establishes several results relating linear clique-width to modular decomposition. We note that these results hold for arbitrary inflations, not only the unique inflations of prime graphs guaranteed by Theorem~\ref{thm:mod:decomp}. The first has several proofs in the literature, including in Brignall, Korpelainen, and Vatter~{\cite[Proposition 3.1]{brignall:linear-clique-w:}}; we provide another as it motivates the proofs that follow.

\begin{proposition}
\label{prop:lcw:inflation}
If $G=H[\{G_v : v\in V(H)\}]$, then
\[
	\lcw(G)
	\le
	\lcw(H) + \max_{v\in \VH} \lcw(G_v).
\]
\end{proposition}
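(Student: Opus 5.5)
Let $k=\lcw(H)$ and $m=\max_{v}\lcw(G_v)$. Fix an efficient lcw expression $\mathcal{E}_H$ for $H$ with labels $1,\dots,k$, and for each $v$ an lcw expression $\mathcal{E}_v$ for $G_v$ using labels from a disjoint set $\{k+1,\dots,k+m\}$ of $m$ auxiliary labels. The plan is to simulate $\mathcal{E}_H$ step by step. Whenever $\mathcal{E}_H$ introduces a vertex $v$ with label $i$, we instead run $\mathcal{E}_v$ in full using the auxiliary labels, and then relabel each auxiliary label to $i$. Edge-creation and relabeling operations of $\mathcal{E}_H$ are copied verbatim, since they act only on labels $1,\dots,k$. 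The total number of labels is $k+m$.

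The key invariant is this: after the simulated introduction of $v$ finishes, every vertex of $G_v$ carries exactly the label that $v$ carries in $\mathcal{E}_H$ at that moment, and this correspondence persists afterwards. Two points need checking.

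\begin{itemize}
\item During the run of $\mathcal{E}_v$, all previously built vertices carry labels in $\{1,\dots,k\}$. The operations of $\mathcal{E}_v$ only mention auxiliary labels, so they create edges only inside $G_v$ and leave all other vertices untouched. Hence $G_v$ is built exactly, and it acquires no edges to the rest of the graph.
\item After the collapse to label $i$, $G_v$ behaves as a single vertex labelled $i$. Consequently every later edge-creation joins $G_u$ completely to $G_w$ exactly when it joins $u$ to $w$ in $H$, and it never joins two vertices of the same $G_v$. This holds because edge-creation requires distinct labels, and vertices of a single $G_v$ always share one label. Relabelings act uniformly on all such vertices.
\end{itemize}

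At the end, therefore, the edges between distinct modules are exactly those of the inflation, and the edges within each module are those of $G_v$. So the result is $G$.

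The step needing the most care is the second point above: no unwanted edges may appear inside a module or between a new module and old ones. Both are settled by the disjointness of the auxiliary labels from the labels of $\mathcal{E}_H$ and by the collapse of each module to a single label. A minor technicality is that relabeling auxiliary labels to $i$ should happen only after $\mathcal{E}_v$ is complete; since relabeling is permitted between any two distinct labels, this causes no difficulty.
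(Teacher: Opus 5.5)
Your proposal is correct and follows essentially the same approach as the paper. Both simulate an efficient expression for $H$, build each $G_v$ in full on a reserved block of $\max_v \lcw(G_v)$ labels disjoint from those of $H$, and then collapse the module to the label of $v$; you additionally spell out the invariant that the paper leaves implicit.
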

\begin{proof}
Fix an efficient lcw expression for $H$. We build an lcw expression for $G$ by following the expression for $H$ step by step, replacing each vertex insertion with a construction of the entire corresponding module: when the expression for $H$ calls for the insertion of vertex $v$ with label $\ell$, we instead build a copy of $G_v$ using a set of $\lcw(G_v)$ labels reserved for module constructions and disjoint from those of the expression for $H$, then relabel every vertex of this copy to $\ell$, and continue. Since the same reserved labels are reused for each module, the resulting expression uses at most $\lcw(H)+\max_{v\in V(H)}\lcw(G_v)$ labels.
\end{proof}

When we are inflating a complete or anti-complete graph, we can slightly improve upon the bound of Proposition~\ref{prop:lcw:inflation}.

\begin{proposition}
\label{prop:lcw:inflation:comp}
If $G=H[\{G_v : v\in V(H)\}]$ where $H$ is complete or anti-complete, then
\[
	\lcw(G)\le 1+\max_{v\in \VH} \lcw(G_v).
\]
\end{proposition}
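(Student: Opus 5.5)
We follow the proof of Proposition~\ref{prop:lcw:inflation}, but replace the expression for the skeleton $H$ with a single extra label. Let $m=\max_{v\in \VH}\lcw(G_v)$. We reserve $m$ labels, numbered $1,\dots,m$, for building the modules, and add one further label $0$, which we call the \emph{accumulator}. Order the vertices of $H$ as $v_1,\dots,v_n$. The plan is to build $G$ one module at a time, in that order, keeping every vertex of the already-completed modules $G_{v_1},\dots,G_{v_{i-1}}$ on label $0$.

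At stage $i$, we run an efficient lcw expression for $G_{v_i}$ using only the labels $1,\dots,\lcw(G_{v_i})\le m$. None of these operations touch label $0$. The operations of that expression only add edges between vertices carrying labels in $\{1,\dots,m\}$, and every such vertex belongs to $G_{v_i}$. So this phase produces exactly $G_{v_i}$ and creates no edges to the earlier modules.

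Next we relabel the vertices of $G_{v_i}$ one label at a time, each $j\in\{1,\dots,m\}$ in turn, and handle the two cases for $H$ separately.
\begin{itemize}[noitemsep, topsep=0pt]
\item If $H$ is complete, then for each $j\in\{1,\dots,m\}$ we first add all edges between labels $j$ and $0$, and then relabel $j$ to $0$. This joins all of $G_{v_i}$ to the union of the earlier modules, which is exactly what $H$ requires, since every pair of distinct vertices of $H$ is adjacent.
\item If $H$ is anti-complete, we simply relabel each $j\ne 0$ to $0$ and create no edges.
\end{itemize}
In both cases, once stage $i$ ends, all vertices of $G_{v_1},\dots,G_{v_i}$ sit on label $0$ and the induced graph is the correct inflation restricted to $v_1,\dots,v_i$. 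Induction on $i$ then shows that the final graph is $G$.

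The only point that needs checking is that the edge-creation steps never add an edge inside a module. This holds because an edge operation between labels $j$ and $0$ only joins a vertex of the current module to a vertex of an earlier module. In addition, the edges added within the current module during its construction are exactly those of $G_{v_i}$. The total number of labels used is $m+1$, which gives the bound in the statement. I do not expect any real obstacle. The only subtlety is the degenerate case where some $G_{v_i}$ has a single vertex, and there the stage reduces to introducing that vertex on label $1$ and then performing the join-and-relabel step.
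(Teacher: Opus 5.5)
Your overall strategy (one extra accumulator label, modules built one at a time, joined to the accumulator in the complete case) is the paper's, but the complete case fails as you have ordered the operations. You interleave, for each $j$ in turn, the join of $j$ with $0$ and the relabeling $j\to 0$. After the relabeling $1\to 0$, label $0$ already contains the vertices of the current module $G_{v_i}$ that were on label $1$. The next join of label $2$ with label $0$ therefore makes every vertex on label $2$ adjacent to them too, whether or not those edges belong to $G_{v_i}$. So your verification sentence, that an edge operation between $j$ and $0$ only joins the current module to earlier modules, is false for $j\ge 2$.

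Concretely, take $H=K_2$, $G_{v_1}=K_1$, and $G_{v_2}=K_2\uplus K_1$. Build $G_{v_2}$ efficiently with two labels: introduce $a$ on label $1$ and $b$ on label $2$, join $1$ and $2$, relabel $2\to 1$, then introduce $c$ on label $2$. Your procedure then adds the edges $ac$ and $bc$. It outputs $K_4$ rather than $K_1\ast(K_2\uplus K_1)$.

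The repair is immediate. First perform all the joins between labels $1,\dots,m$ and label $0$, and only then relabel every $j$ to $0$. This is exactly the order the paper uses: join each of the module's labels to the sink, then relabel all of its vertices to the sink. Alternatively, first relabel all of $G_{v_i}$ onto the single label $1$, then join $1$ with $0$ once and relabel $1\to 0$. With either change, every join performed at stage $i$ has one side entirely inside $G_{v_i}$ and the other entirely inside the earlier modules. Your induction on $i$ then goes through, and the count of $m+1$ labels stands.
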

\begin{proof}
Note that when $H$ is anti-complete, this follows from Proposition~\ref{prop:lcw:inflation} since $\lcw(H)=1$, but we give a uniform argument covering both cases. We reserve one label, $\ell$, as a \emph{sink}: a label that is never relabeled and never used in any edge-creation step except as described below. We then build the modules $G_v$ one at a time, each using an efficient expression with $\lcw(G_v)$ labels, all distinct from $\ell$.

If $H$ is complete, then after completing a module $G_v$, we join each of its labels to $\ell$ (making its vertices adjacent to the vertices of all previously completed modules) and then relabel all of its vertices to $\ell$. If $H$ is anti-complete, we simply relabel all vertices of $G_v$ to $\ell$ after completing it, without any join steps. In both cases, the resulting expression uses at most $1+\max_{v\in V(H)}\lcw(G_v)$ labels.
\end{proof}

The following observation, whose proof amounts to nothing more than giving a single vertex its own private label, allows us to reorder the construction of a graph at the cost of that additional label. We use it in the subsequent proposition, where we are interested in bounding the linear clique-width of the two modules whose linear clique-width is largest.

\begin{proposition}
\label{prop:lcw:reorder}
For every graph $G$ and every vertex $v\in V(G)$, there is an lcw expression of $G$ with at most $\lcw(G)+1$ labels that begins by inserting the vertex $v$.
\end{proposition}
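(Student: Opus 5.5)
Take an efficient lcw expression $E$ for $G$, with labels $\{1,\dots,k\}$ where $k=\lcw(G)$, and add a fresh label $p$ that will be $v$'s private label. The new expression $E'$ first inserts $v$ with label $p$. It then replays $E$ step by step, with three changes: the original insertion of $v$ is deleted, every original operation acts on the labels of $E$ exactly as before, and we add bookkeeping steps so that, at every moment, $v$ receives exactly the edges it would have received in $E$. In total $E'$ uses $k+1$ labels.

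To arrange this, note that in $E$ the vertex $v$ is inserted at some step $\sigma$ with a label $\ell_0$, and from then on its label changes only through relabelings. Write $\ell(\tau)$ for the label $v$ carries in $E$ after step $\tau\ge\sigma$. In $E'$, $v$ keeps label $p$ forever, and no operation of $E$ ever mentions $p$. So, replayed verbatim, the operations of $E$ produce exactly the graph $G-v$ on the other vertices.

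The edges at $v$ must be handled separately. In $E$, the edges between $v$ and the other vertices arise only from edge-creation steps $\eta_{i,j}$ with $\tau>\sigma$ and $\ell(\tau)\in\{i,j\}$. Each such step joins $v$ to every current vertex carrying the other label. Whenever $E$ performs $\eta_{i,j}$ with, say, $\ell(\tau)=i$, we insert, right after it in $E'$, the operation $\eta_{p,j}$. This joins $v$ to exactly the vertices currently labeled $j$, which are the same vertices as in $E$, because the labels of all other vertices evolve identically in $E$ and $E'$. Steps of $E$ before $\sigma$ create no edges at $v$ in $E$, so $E'$ adds nothing for them.

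We must also check that the original operation $\eta_{i,j}$ in $E'$ creates no spurious edges at $v$. This holds automatically, since $v$ carries label $p\notin\{i,j\}$. Edges created by $\eta_{p,j}$ that repeat existing edges are harmless. Since $\ell(\tau)$ can be computed in advance by simulating $E$, the insertion schedule is well defined, and the resulting graph is exactly $G$.

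The only real point to verify is that the label classes of the vertices other than $v$ coincide in $E$ and $E'$ at every step. This follows because deleting $v$ from a label class does not affect how relabelings or joins act on the remaining vertices. There is no genuine obstacle; the construction is essentially bookkeeping.
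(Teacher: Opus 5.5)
Your proposal is correct and is essentially the paper's proof. The paper likewise inserts $v$ first under a fresh private label, replays the efficient expression while skipping $v$'s insertion and its relabelings, and shadows each edge-creation step involving $v$'s would-be label with the corresponding step on the private label.
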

\begin{proof}
Fix an efficient lcw expression for $G$ and let $\ell$ be a new label not used in that expression. We build $G$ by first inserting $v$ with label $\ell$, and then following the original expression step by step with the following modifications. We skip the insertion of $v$ when it arises. Whenever the original expression relabels the label that $v$ would hold at that point, we do not apply this relabeling to $v$ (it retains the label $\ell$ throughout). Whenever the original expression creates edges between the label that $v$ would hold at that point and some other label, we additionally create edges between $\ell$ and that label. This produces the same graph using at most one additional label, namely, $\ell$.
\end{proof}

Combining Propositions~\ref{prop:lcw:inflation} and~\ref{prop:lcw:reorder}, we obtain the following result.

\begin{proposition}
\label{prop:lcw:two:modules}
Suppose that $G=H[G_v : v\in V(H)]$ where $H$ has at least two vertices. Then either there is a vertex $x$ of $H$ for which
\[
	\lcw(G_x)
	=
	\lcw(G),
\]
or there are distinct vertices $x$ and $y$ of $H$ for which
\[
	\lcw(G_x),\ \lcw(G_y)
	\ge
	\lcw(G)-\lcw(H)-1.
\]
\end{proposition}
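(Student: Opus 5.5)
We argue by contraposition. Suppose no vertex $x$ of $H$ satisfies $\lcw(G_x)=\lcw(G)$. Since each $G_v$ is an induced subgraph of $G$, this means $\lcw(G_v)\le \lcw(G)-1$ for every $v$. Suppose further that at most one vertex $x$ of $H$ satisfies $\lcw(G_x)\ge \lcw(G)-\lcw(H)-1$. Then every other vertex $v\ne x$ has $\lcw(G_v)\le \lcw(G)-\lcw(H)-2$. We want to derive $\lcw(G)\le \lcw(G)-1$, a contradiction. If no such $x$ exists at all, Proposition~\ref{prop:lcw:inflation} gives $\lcw(G)\le \lcw(H)+\lcw(G)-\lcw(H)-2$, which is already a contradiction. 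So we assume exactly one exceptional vertex $x$.

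The construction is a modification of the proof of Proposition~\ref{prop:lcw:inflation}, in which the big module $G_x$ is built first. By Proposition~\ref{prop:lcw:reorder}, $H$ has an lcw expression with at most $\lcw(H)+1$ labels that begins by inserting $x$. In that expression, $x$ is inserted when no other vertex exists yet.

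\begin{itemize}[noitemsep, topsep=0pt]
\item At that first step, build $G_x$ using an efficient expression with $\lcw(G_x)\le\lcw(G)-1$ labels. Nothing else is present, so these labels are free.
\item Relabel all of $G_x$ to the label that $x$ receives. After this, $G_x$ occupies one label, which is among the $\lcw(H)+1$ labels of the expression for $H$.
\item Continue as in Proposition~\ref{prop:lcw:inflation}. Each later insertion of $v\ne x$ is replaced by building $G_v$ on a reserved block of $\lcw(G)-\lcw(H)-2$ labels, disjoint from the $\lcw(H)+1$ labels of $H$. The module is then relabeled to $v$'s label.
\end{itemize}

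Count the labels in the two phases. While $G_x$ is being built, at most $\lcw(G)-1$ labels are in use. Afterwards, at most $(\lcw(H)+1)+(\lcw(G)-\lcw(H)-2)=\lcw(G)-1$ labels are in use. Label names can be reused between the phases, since the first phase ends with a single label occupied. Hence $\lcw(G)\le \lcw(G)-1$, the desired contradiction.

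The main obstacle is this label accounting at the moment $G_x$ is built. We have to check that no other labels of the $H$-expression are active then, which is exactly why Proposition~\ref{prop:lcw:reorder} is used to put $x$ first. We must also check that $x$'s label in the reordered expression can be chosen as the target of the final relabeling of $G_x$. This is fine, because relabeling onto any name is permitted and label names are interchangeable.
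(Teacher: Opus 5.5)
Your proof is correct and is essentially the paper's argument: build $G_x$ first using the reordered expression for $H$ from Proposition~\ref{prop:lcw:reorder}, then bound the label count separately for the phase that builds $G_x$ and the phase that builds everything else. The paper phrases it directly rather than by contraposition: it chooses $x$ to maximize $\lcw(G_x)$ and notes that one of the two phase counts must reach $\lcw(G)$, which makes your separate ``no exceptional vertex'' case unnecessary.
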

\begin{proof}
Choose $x\in V(H)$ to maximize $\lcw(G_x)$. By Proposition~\ref{prop:lcw:reorder}, there is an expression for $H$ using at most $\lcw(H)+1$ labels in which the vertex~$x$ is inserted first. Following the proof of Proposition~\ref{prop:lcw:inflation}, we use this expression to build $G$: we first construct $G_x$ using $\lcw(G_x)$ labels, then relabel all of its vertices to the label assigned to $x$ in the expression for $H$, and continue following the expression for $H$, building each subsequent module using the same $\lcw(G_x)$ labels. The construction of $G_x$ uses $\lcw(G_x)$ labels, while the remainder of the construction uses at most
\[
	1+\lcw(H)+\max_{v\neq x} \lcw(G_v)
\]
labels. Since this is an lcw expression for $G$, one of these two quantities must be at least $\lcw(G)$. If the first is, then we have $\lcw(G_x)=\lcw(G)$. Otherwise, choosing $y\in V(H)\setminus\{x\}$ to maximize $\lcw(G_y)$, we obtain $\lcw(G_y)\ge\lcw(G)-\lcw(H)-1$, and $\lcw(G_x)\ge\lcw(G_y)$ by our choice of~$x$.
\end{proof}

\section{Proof of Theorem~\ref{thm:primes:bdd:lcw}}

Our main result follows from the following result, which can be considered as a more precise version of Observation~\ref{obs:universal}. Indeed, Proposition~\ref{prop:main} shows that any graph in a hereditary class satisfying the hypotheses of Theorem~\ref{thm:primes:bdd:lcw} has linear clique-width bounded by a function of $m$, $t$, and $s$, where $m$ bounds the linear clique-width of the prime members, and $Q_t$ and $\overline{Q}_s$ are the first universal quasi-threshold and co-quasi-threshold graphs not in the class. Conversely, if a hereditary class contains all quasi-threshold graphs, then by Observation~\ref{obs:universal} it contains all $Q_t$, so its linear clique-width is unbounded, and likewise if it contains all co-quasi-threshold graphs.

The idea of the proof is as follows. Given a graph $G$ of large linear clique-width, the modular decomposition produces a skeleton $H$ (of bounded linear clique-width) and modules whose linear clique-width is governed by Proposition~\ref{prop:lcw:two:modules}: that is, either one module matches the linear clique-width of $G$ (in which case we apply induction) or two of them have large linear clique-width. If $H$ is prime, there exist vertices with mixed adjacencies to these large modules, which directly yield the joins and disjoint unions needed to build $Q_t$ or $\overline{Q}_s$. If instead $H$ is complete or anti-complete, then $G$ provides only one of these two operations. We then apply the modular decomposition a second time to the module of largest linear clique-width, which is forced to have the ``opposite'' type of skeleton, and find what we need at this second level.

\begin{proposition}
\label{prop:main}
Let $G$ be a graph for which the linear clique-width of every prime induced subgraph is at most $m$, and let $t,s\ge 1$. If
\[
	\lcw(G)\ge (m+2)(t+s),
\]
then $G$ contains $Q_t$ or $\overline{Q}_s$ as an induced subgraph.
\end{proposition}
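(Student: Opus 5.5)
We argue by induction on $t+s$, proving the statement in the following form: if $\lcw(G)\ge (m+2)(t+s)$ then $Q_t\le G$ or $\overline{Q}_s\le G$. The base cases are easy. If $t=1$ or $s=1$, then any nonempty $G$ contains $K_1=Q_1=\overline{Q}_1$.

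For the inductive step, fix $G$ with $\lcw(G)\ge (m+2)(t+s)$ and choose it minimal under induced subgraphs. Since $\lcw$ is monotone under induced subgraphs, we may replace $G$ by a minimal induced subgraph with $\lcw(G)\ge (m+2)(t+s)$. Then every proper induced subgraph, and in particular every module $G_v$, has strictly smaller linear clique-width. This rules out the first alternative of Proposition~\ref{prop:lcw:two:modules}.

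\textbf{Prime skeleton.} Take the first stage of the modular decomposition, $G=H[G_v]$. The skeleton $H$ is an induced subgraph of $G$, obtained by choosing one vertex per module.

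Suppose $H$ is prime with at least four vertices. Then $\lcw(H)\le m$, so Proposition~\ref{prop:lcw:two:modules} gives distinct $x,y$ with
\[
\lcw(G_x),\lcw(G_y)\ge (m+2)(t+s)-m-1\ge (m+2)(t+s-1).
\]
By induction (with $(t-1,s)$), each of $G_x,G_y$ contains $Q_{t-1}$ or $\overline{Q}_s$. If either contains $\overline{Q}_s$ we are done, so assume both contain $Q_{t-1}$. Applying induction with $(t,s-1)$, we may likewise assume both contain $\overline{Q}_{s-1}$.

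Primality of $H$ means $\{x,y\}$ is not a module. So some $z\in V(H)$ is adjacent to exactly one of $x,y$, say to $x$ and not $y$.

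If $x\not\sim y$, pick any vertex $w\in G_z$ together with copies of $Q_{t-1}$ in $G_x$ and in $G_y$. Then $w$ is joined to the first copy and not to the second, and the two copies are nonadjacent. This is exactly $(K_1\ast Q_{t-1})\uplus Q_{t-1}=Q_t$.

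If $x\sim y$, use copies of $\overline{Q}_{s-1}$ in $G_y$ and in $G_x$. The vertex $w$ is nonadjacent to the copy in $G_y$, and both are fully joined to the copy in $G_x$. This gives $(K_1\uplus\overline{Q}_{s-1})\ast\overline{Q}_{s-1}=\overline{Q}_s$.

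\textbf{Complete or anti-complete skeleton.} By complementation symmetry, suppose $H$ is anti-complete, so the $G_v$ are the connected components of $G$. Proposition~\ref{prop:lcw:inflation:comp} gives
\[
\max\lcw(G_v)\ge \lcw(G)-1.
\]
Proposition~\ref{prop:lcw:two:modules} with $\lcw(H)=1$ gives two components $G_x,G_y$ with linear clique-width at least $\lcw(G)-2$. By induction with $(t-1,s)$, each contains $Q_{t-1}$ or we are done.

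We then need a vertex adjacent to an entire copy of $Q_{t-1}$, and this is the main obstacle. To get it, decompose $G_x$ a second time. Since $G_x$ is connected with at least two vertices, its skeleton is either complete (co-components) or prime.

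\emph{Prime case.} Here $G_x$ has two large modules, and the argument above applies inside $G_x$.

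\emph{Complete case.} Here $G_x=\ast_i\,G_{x,i}$ for co-components $G_{x,i}$. Proposition~\ref{prop:lcw:inflation:comp} gives some co-component, say $G_{x,1}$, with
\[
\lcw(G_{x,1})\ge \lcw(G_x)-1\ge \lcw(G)-3\ge (m+2)(t+s-1),
\]
using $m\ge 1$ so that $m+2\ge 3$. By induction, $G_{x,1}$ contains $Q_{t-1}$ or $\overline{Q}_s$. Any vertex of another co-component $G_{x,2}$ is joined to all of $G_{x,1}$. So we obtain $K_1\ast Q_{t-1}$ inside $G_x$, together with a disjoint, nonadjacent $Q_{t-1}$ inside $G_y$, which is $Q_t$.

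The budget $(m+2)$ per unit decrease of $t+s$ is what absorbs the losses of $m+1$ (prime case) or up to $3$ (the two-level case). I expect the delicate part to be verifying that every branch loses at most $m+2$ from the bound. This includes the sub-case where the second-level skeleton is prime: there the loss is $2+m+1$ if done naively. To avoid that, I would instead apply the prime-case argument directly to $G_x$ using the minimality of $G$, which keeps the loss at $m+2$ relative to $\lcw(G_x)\ge \lcw(G)-1$. If that accounting fails, I would fall back to strengthening the induction hypothesis to allow an extra additive constant.
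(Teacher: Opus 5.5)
Your architecture is the paper's: a distinguishing vertex in a prime skeleton, and a second decomposition of the large component or co-component when the skeleton is complete or anti-complete. Inducting on $t+s$ with an induced-minimal $G$ is a legitimate substitute for the paper's induction on $|V(G)|$, and it does discard the first alternative of Proposition~\ref{prop:lcw:two:modules} at the top level. Your handling of a complete second-level skeleton (a large co-component plus a fresh call with $(t-1,s)$) is also fine.

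The gap is the second-level prime case, which you leave open, and the fix you propose does not work. Minimality of $G$ only says that proper induced subgraphs of $G$ have $\lcw<(m+2)(t+s)$. $G_x$ already satisfies this, so nothing excludes a module $G'_a$ of $G_x$ with $\lcw(G'_a)=\lcw(G_x)$. In fact, under minimality you can never recurse at level $(t,s)$ inside $G_x$ at all.

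The missing observation is that at the second level you need only \emph{one} large module together with a vertex complete to it. Take $x$ maximizing $\lcw(G_x)$, so $\lcw(G_x)\ge\lcw(G)-1$ by Proposition~\ref{prop:lcw:inflation:comp}. By Proposition~\ref{prop:lcw:inflation}, the skeleton $H'$ of $G_x$ has a vertex $a$ with $\lcw(G'_a)\ge\lcw(G_x)-m\ge(m+2)(t+s-1)+1$. So $G'_a$ contains $Q_{t-1}$ (or $\overline{Q}_s$, and you are done). Since $H'$ is complete or prime on at least four vertices, it is connected, so $a$ has a neighbour $c$. Any vertex of $G'_c$ then gives $K_1\ast Q_{t-1}$ inside $G_x$, and together with the $Q_{t-1}$ in $G_y$ this is $Q_t$. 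No strengthening of the induction hypothesis is needed. The paper applies Proposition~\ref{prop:lcw:two:modules} to $G_x$ at this point, but its distinguishing vertex $c$ is likewise used only against the single module $G'_a$.

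Separately, ``by complementation symmetry'' is not available literally. Neither $\lcw$ (e.g.\ $\lcw(K_n)=2$ but $\lcw(\overline{K}_n)=1$) nor the hypothesis on prime induced subgraphs is complement-invariant, which is why the paper writes out the complete case. The argument does dualize, but the constants change:
\begin{itemize}
\item At the top level, Proposition~\ref{prop:lcw:two:modules} now loses $\lcw(K_n)+1=3$. This is absorbed because $G$ contains $K_2$, so $m\ge 2$.
\item You must take $x$ maximizing, since $\lcw(G)-3-m$ is one short of $(m+2)(t+s-1)$.
\item At the second level, use that the skeleton of the co-connected $G_x$ is co-connected. Then a large module containing $\overline{Q}_{s-1}$ has a vertex anticomplete to it, giving $K_1\uplus\overline{Q}_{s-1}$, which is joined to the $\overline{Q}_{s-1}$ in $G_y$.
\end{itemize}
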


\begin{proof}
We proceed by induction on the number of vertices of $G$. We may assume that $m\ge 2$, as otherwise $G$ cannot contain $K_2$, so $G$ is anti-complete and cannot satisfy the bound in the proposition for any $t,s\ge 1$. In particular, $G$ has at least two vertices, establishing the base case of our induction.

We may also assume that $t,s\ge 3$. Cases with $t=1$ or $s=1$ are trivial, since $Q_1=\overline{Q}_1=K_1$ is an induced subgraph of every nonempty graph. For $t=2$, note that a graph not containing $Q_2=K_2\uplus K_1$ as an induced subgraph is complete multipartite, with linear clique-width at most~$2$. Since $m\ge 2$ and $s\ge 1$, we have $(m+2)(2+s)\ge 12>2$, so every graph satisfying the bound contains $Q_2\le Q_t$. Dually, a graph not containing $\overline{Q}_2=P_3$ is a disjoint union of cliques, with linear clique-width at most~$3$. Since $m\ge 2$ and $t\ge 1$, we have $(m+2)(t+2)\ge 12>3$, so every graph satisfying our bound also contains $\overline{Q}_2\le\overline{Q}_s$. We make this reduction because $t,s\ge 3$ ensures that~$Q_{t-1}$ is disconnected and $\overline{Q}_{s-1}$ is connected, properties we use in the argument below.

Now suppose that $G$ satisfies the hypotheses of the proposition with $m\ge 2$ and $t,s\ge 3$ and that the result holds for all graphs with fewer vertices. Apply the first stage of the modular decomposition to write
\[
	G=H[G_v : v\in V(H)]
\]
where $H$ is complete, anti-complete, or prime on at least four vertices, and in the first two cases the modules $G_v$ are the co-components or connected components of $G$, respectively. Let $x$ be a vertex of $H$ for which $\lcw(G_x)$ is maximal amongst all modules $G_v$. If $\lcw(G_x)=\lcw(G)$, then we are done by induction because $G_x$ itself satisfies the hypotheses and therefore contains either $Q_t$ or $\overline{Q}_s$.

Otherwise, Proposition~\ref{prop:lcw:two:modules} provides a vertex $y\neq x$ of $H$ such that
\[
	\lcw(G_x),\ \lcw(G_y)
	\ge
	\lcw(G)-\lcw(H)-1.
\]
Since $\lcw(H)\le m$ (as $H$ is an induced subgraph of $G$ that is either complete, anti-complete, or prime), we have
\[
	\lcw(G_x),\ \lcw(G_y)
	\ge
	\lcw(G)-m-1
	\ge
	(m+2)(t+s-1)+1.
\]
Since $G_x$ and $G_y$ have fewer vertices than $G$, the inductive hypothesis applies to them for all values of $t$ and $s$. Applying it with $(t-1,s)$ and $(t,s-1)$, we conclude that each of $G_x$ and $G_y$ contains~$Q_{t-1}$ or $\overline{Q}_s$, and each also contains $Q_t$ or $\overline{Q}_{s-1}$. If either contains $Q_t$ or $\overline{Q}_s$, then so does $G$ and we are done. We may therefore assume that both $G_x$ and~$G_y$ contain $Q_{t-1}$ and $\overline{Q}_{s-1}$.

First suppose that $H$ is prime on four or more vertices. Because $\{x,y\}$ does not form a module in~$H$, there is some vertex $z\in V(H)$ adjacent to precisely one of $x$ or $y$. If $x$ and $y$ are adjacent in~$H$, then~$G$ contains $\overline{Q}_s=(K_1\uplus \overline{Q}_{s-1})\ast\overline{Q}_{s-1}$, while if $x$ and $y$ are nonadjacent in $H$, then~$G$ contains $Q_t=(K_1\ast Q_{t-1})\uplus Q_{t-1}$, and in either case we are done.

It remains to consider the case where $H$ is complete or anti-complete. We treat the anti-complete case first, and then explain the modifications needed for the complete case.

Suppose that $H$ is anti-complete, so $G$ is the disjoint union of its connected components, two of which are $G_x$ and $G_y$. By Proposition~\ref{prop:lcw:inflation:comp}, $\lcw(G_x)\ge\lcw(G)-1\ge (m+2)(t+s)-1$. Since $G_x$ is connected, its skeleton is either complete or prime on at least four vertices.

If $G_x$ is the inflation of a complete graph (a join of co-components), then since $Q_{t-1}$ is disconnected, it lies entirely within a single co-component of $G_x$, and any vertex from another co-component is adjacent to all of $Q_{t-1}$. Thus $G_x$ contains $K_1\ast Q_{t-1}$, which together with the copy of $Q_{t-1}$ in $G_y$ gives $Q_t=(K_1\ast Q_{t-1})\uplus Q_{t-1}$ in $G$.

If $G_x$ is the inflation of a prime graph $H'$ on four or more vertices, we apply Proposition~\ref{prop:lcw:two:modules} to $G_x$. Let $a$ be a vertex of $H'$ maximizing $\lcw(G'_a)$. If $\lcw(G'_a)=\lcw(G_x)$, then $G'_a$ has fewer vertices than $G$ and satisfies the hypotheses of the proposition, so we are done by induction. Otherwise, Proposition~\ref{prop:lcw:two:modules} provides a vertex $b\neq a$ of $H'$ with
\[
	\lcw(G'_a),\ \lcw(G'_b)
	\ge
	\lcw(G_x)-m-1
	\ge
	(m+2)(t+s-1).
\]
Applying the inductive hypothesis to $G'_a$ and $G'_b$ as we did with $G_x$ and $G_y$ earlier, we may assume that they each contain $Q_{t-1}$ and $\overline{Q}_{s-1}$.

Since $H'$ is prime, $\{a,b\}$ is not a module, so there exists a vertex $c\in V(H')$ adjacent to precisely one of $a$ or $b$. Say $c$ is adjacent to $a$. Any vertex of $G'_c$ is then adjacent to all of $Q_{t-1}$ inside $G'_a$, giving a copy of $K_1\ast Q_{t-1}$ inside $G_x$. Combined with the copy of $Q_{t-1}$ in $G_y$ (a separate component of $G$), we obtain $Q_t=(K_1\ast Q_{t-1})\uplus Q_{t-1}$ in $G$, completing the proof in this case.

Now suppose that $H$ is complete, so $G$ is the join of its co-components, two of which are $G_x$ and $G_y$. By Proposition~\ref{prop:lcw:inflation:comp}, $\lcw(G_x)\ge (m+2)(t+s)-1$ as before. Since $G_x$ is co-connected, its skeleton is either anti-complete or prime on at least four vertices. If $G_x$ is the inflation of an anti-complete graph (a disjoint union of connected components), then $\overline{Q}_{s-1}$ is connected (since $s\ge 3$), so it lies within a single component of $G_x$, and any vertex from another component is nonadjacent to all of~$\overline{Q}_{s-1}$. Thus $G_x$ contains $K_1\uplus\overline{Q}_{s-1}$, which together with the copy of~$\overline{Q}_{s-1}$ in~$G_y$ (joined to~$G_x$ since~$H$ is complete) gives $\overline{Q}_s=(K_1\uplus\overline{Q}_{s-1})\ast\overline{Q}_{s-1}$ in~$G$. If instead $G_x$ has a prime skeleton, we find two large modules inside~$G_x$ each containing~$Q_{t-1}$ and~$\overline{Q}_{s-1}$, exactly as in the anti-complete case. The prime skeleton of $G_x$ again provides a vertex $c$ adjacent to precisely one of them, say $c$ is adjacent to $a$ but not to $b$. A vertex of $G'_c$ is then nonadjacent to all of $\overline{Q}_{s-1}$ inside $G'_b$, giving $K_1\uplus\overline{Q}_{s-1}$ inside $G_x$. Together with the copy of $\overline{Q}_{s-1}$ in $G_y$ (joined to $G_x$ since $H$ is complete), we obtain $\overline{Q}_s=(K_1\uplus\overline{Q}_{s-1})\ast\overline{Q}_{s-1}$ in~$G$. This completes the proof.
\end{proof}

We do not claim that the bound $(m+2)(t+s)$ in Proposition~\ref{prop:main} is optimal, although there is no slack in the induction. The linear dependence on $t+s$ is correct, since $\lcw(Q_t)$ itself grows linearly in $t$.

\section{Concluding Remarks}

Theorem~\ref{thm:primes:bdd:lcw} shows that the quasi-threshold graphs and their complements are the only obstructions to lifting bounded linear clique-width from the prime members to the whole class. Ferguson and Vatter~\cite[Theorem~5.1]{ferguson:letter-graphs-a:} proved an analogous result for lettericity, another graph width parameter: when the prime members of a class have bounded lettericity, the class has bounded lettericity if and only if it excludes all matchings, all co-matchings, and all ``stacked paths''.

More broadly, both results are instances of a familiar theme across combinatorics: if the ``atoms'' of a decomposition are controlled, then unbounded complexity can only arise from a self-embedding mechanism. The universal quasi-threshold graphs exhibit this concretely, as each~$Q_t$ contains two induced copies of~$Q_{t-1}$ with one of the copies joined with $K_1$ to distinguish it. Complete binary trees provide another example: each is built by joining two copies of a smaller tree through a common root, driving linear clique-width to infinity. (Binary trees do not contradict Theorem~\ref{thm:primes:bdd:lcw}, because the self-embedding here occurs within the prime induced subgraphs themselves, which have unbounded linear clique-width.) The same moral underlies the excluded grid theorem, which identifies large grid minors as the canonical self-similar obstruction to bounded treewidth, and the classical result of Chomsky and Sch\"utzenberger~\cite{chomsky:the-algebraic-t:} that a context-free grammar with no recursive nonterminals generates a regular language.

The connection to permutation patterns is worth elaborating. The result of Brignall, Korpelainen, and Vatter~\cite{brignall:linear-clique-w:} that we generalize here was itself the graph-theoretic analogue of the enumerative results of Albert, Atkinson, and Vatter~\cite{albert:subclasses-of-t:} for separable permutations. In that translation, cographs correspond to separable permutations, quasi-threshold and co-quasi-threshold graphs correspond to the classes $\Av(132)$, $\Av(213)$, $\Av(231)$, and $\Av(312)$, and modular decomposition corresponds to substitution decomposition. Albert, Atkinson, and Vatter showed that a subclass of the separable permutations avoiding one of these four classes has a rational generating function, while our Theorem~\ref{thm:primes:bdd:lcw} extends the graph side of this analogy beyond cographs to arbitrary hereditary classes whose prime graphs have bounded linear clique-width. Our results here suggest it may be possible to extend the results of Albert, Atkinson, and Vatter~\cite{albert:subclasses-of-t:} to the context of inflations of geometric grid classes~\cite{albert:inflations-of-g:,albert:geometric-grid-:}.

\minisec{Acknowledgements}
This research was conducted at the Oberwolfach Mini-Workshop on Permutation Patterns, \#2405c, held January 28--February 2, 2024.

\setlength{\bibsep}{4pt}

\bibliographystyle{acm}
\bibliography{lcw-refs}

@incollection{chomsky:the-algebraic-t:,
	address = {Amsterdam, The Netherlands},
	author = {Chomsky, N. and Sch{{\"{u}}}tzenberger, Marcel Paul},
	booktitle = {Computer programming and formal systems},
	doi = {10.1016/S0049-237X(08)72023-8},
	mrclass = {94.50 (68.00)},
	mrnumber = {MR0152391 (27 \#2371)},
	pages = {118--161},
	publisher = {North-Holland},
	title = {The algebraic theory of context-free languages},
	year = {1963}}

@article{wanke:k-nlc-graphs-an:,
	author = {Wanke, Egon},
	doi = {10.1016/0166-218X(94)90026-4},
	journal = {Discrete Appl. Math.},
	number = {2-3},
	pages = {251--266},
	title = {{$k$}-{NLC} graphs and polynomial algorithms},
	volume = {54},
	year = {1994}}

@article{lozin:the-relative-cl:,
	author = {Lozin, Vadim and Rautenbach, Dieter},
	doi = {10.1016/j.jctb.2007.04.001},
	journal = {J. Combin. Theory Ser. B},
	number = {5},
	pages = {846--858},
	title = {The relative clique-width of a graph},
	volume = {97},
	year = {2007}}

@article{courcelle:upper-bounds-to:,
	author = {Courcelle, Bruno and Olariu, Stephan},
	doi = {10.1016/S0166-218X(99)00184-5},
	journal = {Discrete Appl. Math.},
	number = {1-3},
	pages = {77--114},
	title = {Upper bounds to the clique width of graphs},
	volume = {101},
	year = {2000}}

@article{courcelle:handle-rewritin:,
	author = {Courcelle, Bruno and Engelfriet, Joost and Rozenberg, Grzegorz},
	doi = {10.1016/0022-0000(93)90004-G},
	journal = {J. Comput. System Sci.},
	number = {2},
	pages = {218--270},
	title = {Handle-rewriting hypergraph grammars},
	volume = {46},
	year = {1993}}

@article{albert:inflations-of-g:,
	author = {Albert, Michael Henry and Ru{\v{s}}kuc, Nikola and Vatter, Vincent},
	doi = {10.1007/s11856-014-1098-8},
	journal = {Israel J. Math.},
	number = {1},
	pages = {73--108},
	title = {Inflations of geometric grid classes of permutations},
	volume = {205},
	year = {2015}}

@article{albert:geometric-grid-:,
	author = {Albert, Michael Henry and Atkinson, Michael David and Bouvel, Mathilde and Ru{\v{s}}kuc, Nikola and Vatter, Vincent},
	doi = {10.1090/S0002-9947-2013-05804-7},
	journal = {Trans. Amer. Math. Soc.},
	number = {11},
	pages = {5859--5881},
	title = {Geometric grid classes of permutations},
	volume = {365},
	year = {2013}}

@article{albert:subclasses-of-t:,
	author = {Albert, Michael Henry and Atkinson, Michael David and Vatter, Vincent},
	doi = {10.1112/blms/bdr022},
	journal = {Bull. Lond. Math. Soc.},
	number = {5},
	pages = {859--870},
	title = {Subclasses of the separable permutations},
	volume = {43},
	year = {2011}}

@article{gurski:on-the-relation:,
	author = {Gurski, Frank and Wanke, Egon},
	doi = {10.1016/j.tcs.2005.05.018},
	journal = {Theoret. Comput. Sci.},
	number = {{{1-2}}},
	pages = {76--89},
	title = {On the relationship between {NLC}-width and linear {NLC}-width},
	volume = {347},
	year = {2005}}

@article{daligault:well-quasi-orde:,
	author = {Daligault, Jean and Rao, Michael and Thomass\'e, St\'ephan},
	doi = {10.1007/s11083-010-9174-0},
	fjournal = {Order. A Journal on the Theory of Ordered Sets and its Applications},
	issn = {0167-8094,1572-9273},
	journal = {Order},
	mrclass = {06A07 (05C12 05C78)},
	mrnumber = {2728737},
	number = {3},
	pages = {301--315},
	title = {Well-quasi-order of relabel functions},
	volume = {27},
	year = {2010}}

@article{brignall:a-framework-for:,
	author = {Brignall, R. and Cocks, D.},
	doi = {10.1137/22M1487448},
	fjournal = {SIAM Journal on Discrete Mathematics},
	issn = {0895-4801,1095-7146},
	journal = {SIAM J. Discrete Math.},
	mrclass = {05C75 (05C85)},
	mrnumber = {4660705},
	number = {4},
	pages = {2558--2584},
	title = {A framework for minimal hereditary classes of graphs of unbounded clique-width},
	volume = {37},
	year = {2023}}

@article{brignall:uncountably-man:cw,
	author = {Brignall, R. and Cocks, D.},
	doi = {10.37236/10483},
	fjournal = {Electronic Journal of Combinatorics},
	issn = {1077-8926},
	journal = {Electron. J. Combin.},
	mrclass = {05C75 (05C85)},
	mrnumber = {4406224},
	number = {1},
	pages = {Paper No. 1.63, 27 pp.},
	title = {Uncountably many minimal hereditary classes of graphs of unbounded clique-width},
	volume = {29},
	year = {2022}}

@article{atminas:minimal-classes:,
	author = {Atminas, A. and Brignall, R. and Lozin, V. and Stacho, J.},
	doi = {10.1016/j.dam.2021.02.007},
	fjournal = {Discrete Applied Mathematics. The Journal of Combinatorial Algorithms, Informatics and Computational Sciences},
	issn = {0166-218X,1872-6771},
	journal = {Discrete Appl. Math.},
	mrclass = {05C75 (05C69 05C85 06A07)},
	mrnumber = {4225694},
	pages = {57--69},
	title = {Minimal classes of graphs of unbounded clique-width defined by finitely many forbidden induced subgraphs},
	volume = {295},
	year = {2021}}

@article{collins:infinitely-many:,
	author = {Collins, Andrew and Foniok, Jan and Korpelainen, Nicholas and Lozin, Vadim and Zamaraev, Viktor},
	doi = {10.1016/j.dam.2017.02.012},
	journal = {Discrete Appl. Math.},
	pages = {145--152},
	title = {Infinitely many minimal classes of graphs of unbounded clique-width},
	volume = {248},
	year = {2018}}

@article{lozin:minimal-classes:,
	author = {Lozin, Vadim},
	doi = {10.1007/s00026-011-0117-2},
	fjournal = {Annals of Combinatorics},
	issn = {0218-0006},
	journal = {Ann. Comb.},
	mrclass = {05C75},
	mrnumber = {2854788},
	number = {4},
	pages = {707--722},
	title = {Minimal classes of graphs of unbounded clique-width},
	volume = {15},
	year = {2011}}

@article{alecu:between-clique-:,
	author = {Alecu, Bogdan and Kant\'{e}, Mamadou Moustapha and Lozin, Vadim and Zamaraev, Viktor},
	doi = {10.1016/j.disc.2020.111926},
	fjournal = {Discrete Mathematics},
	issn = {0012-365X,1872-681X},
	journal = {Discrete Math.},
	mrclass = {05C75 (05C69 05C85)},
	mrnumber = {4085903},
	number = {8},
	pages = {Article 111926, 14 pp.},
	title = {Between clique-width and linear clique-width of bipartite graphs},
	volume = {343},
	year = {2020}}

@incollection{gallai:a-translation-o:,
	address = {Chichester, England},
	author = {Maffray, Fr{\'e}d{\'e}ric and Preissmann, Myriam},
	booktitle = {Perfect Graphs},
	editor = {{Ram{\'\i}rez Alfons{\'\i }n}, Jorge Luis and Reed, Bruce Alan},
	pages = {25--66},
	publisher = {Wiley},
	series = {Wiley Series in Discrete Math. \& Optim.},
	title = {A translation of {G}allai's paper: ``{T}ransitiv orientierbare {G}raphen''},
	volume = {44},
	year = {2001}}

@article{gallai:transitiv-orien:,
	author = {Gallai, Tibor},
	doi = {10.1007/BF02020961},
	fjournal = {Acta Mathematica Academiae Scientiarum Hungaricae},
	journal = {Acta Math. Acad. Sci. Hungar.},
	number = {1-2},
	pages = {25--66},
	title = {Transitiv orientierbare {G}raphen},
	volume = {18},
	year = {1967}}

@article{ferguson:letter-graphs-a:,
	author = {Ferguson, Robert and Vatter, Vincent},
	doi = {10.1016/j.dam.2021.11.007},
	journal = {Discrete Appl. Math.},
	pages = {215--220},
	title = {Letter graphs and modular decomposition},
	volume = {309},
	year = {2022}}

@article{brignall:linear-clique-w:,
	author = {Brignall, Robert and Korpelainen, Nicholas and Vatter, Vincent},
	doi = {10.1002/jgt.22037},
	journal = {J. Graph Theory},
	pages = {501--511},
	title = {Linear clique-width for hereditary classes of cographs},
	volume = {84},
	year = {2017}}

\end{document}